\documentclass[a4paper, 11pt]{amsart} 
\usepackage[
  margin=1.9cm,
  includefoot,
  footskip=30pt,
]{geometry}
\usepackage{amsmath,amssymb,amscd}
\usepackage{amsfonts}
\usepackage[english]{babel}
\usepackage[leqno]{amsmath}
\usepackage{amssymb,amsthm}
\usepackage{mathrsfs} 
\usepackage{amscd}
\usepackage{enumerate}
\usepackage{epsfig}
\usepackage{relsize}
\usepackage{layout}
\usepackage[usenames,dvipsnames]{xcolor}
\usepackage[backref=page]{hyperref}
\usepackage{tikz}
\hypersetup{
 colorlinks,
 citecolor=Green,
 linkcolor=Red,
 urlcolor=Blue}
\usepackage[matrix,arrow,tips,curve]{xy}
\input{xy}
\xyoption{all}
\definecolor{darkgreen}{rgb}{0.0, 0.7, 0.0}
\definecolor{purple}{rgb}{0.5, 0.0, 0.5}
\definecolor{red}{rgb}{0.8, 0.2, 0.0}

\newtheorem{thm}{Theorem}[section]
\newtheorem{bthm}{Theorem}

\newtheorem{lemma}[thm]{Lemma}

\numberwithin{equation}{section}
\theoremstyle{definition}

\theoremstyle{remark}

\def \P{\mathbb{P}}

\def \E{\mathcal E}

\def\O{\mathcal O}

\def\M0{\mathcal M^0}

\begin{document}

\title[On the existence of smooth varieties of any dimension carrying no Ulrich bundles]{On the existence of smooth varieties of any dimension carrying no Ulrich bundles for some very ample line bundle}

\author[A.F. Lopez]{Angelo Felice Lopez}

\address{\hskip -.43cm Angelo Felice Lopez, Dipartimento di Matematica e Fisica, Universit\`a di Roma
Tre, Largo San Leonardo Murialdo 1, 00146, Roma, Italy. e-mail {\tt angelo.lopez@uniroma3.it}}

\thanks{Author is partially supported by the GNSAGA group of INdAM.}

\thanks{{\it Mathematics Subject Classification}: Primary 14J60. Secondary 14F06.}

\begin{abstract} 
Using some recent examples of \cite{a} in dimension $2$, we prove that for any $n \ge 3$ there exist many smooth $n$-dimensional varieties $X$ with a very ample line bundle $H$, such that there is no Ulrich bundle for $(X,H)$. Moreover, we find many new examples in dimension $2$.
\end{abstract}

\maketitle

\section{Introduction}

In their seminal 2003 paper \cite{es}, Eisenbud, Schreyer and Weyman introduced Ulrich vector bundles in the realm of algebraic geometry. Let $X$ be a smooth irreducible variety of dimension $n \ge 1$ and let $H$ be a very ample line bundle on $X$. A vector bundle $\E$ on $X$ is Ulrich if $H^i(\E(-pH))=0$ for all $i \ge 0$ and $1 \le p \le n$. The authors of \cite{es} showed what important consequences on the geometry of $X$ could be deduced from an Ulrich bundle and posed the main question of their existence (in fact, for an Ulrich sheaf on any projective scheme). In the case of complete intersections, existence followed by previous results in algebra \cite{hub}. Since then, many papers have been written to show that Ulrich bundles exist on several classes of pairs $(X,H)$. Some of these existence results include curves (and any polarization), many surfaces (see for example \cite{b2, f, c1, c2, cohu, lo}), some threefolds, including Fano threefolds \cite{b2, cfk1, cfk2}, and other scattered examples of higher dimensional varieties (see \cite{cmr, es, b2, cmp} and references therein).
Moreover, several results (see for example \cite{lm, lms, lo2, ls}) confirmed that the existence of an Ulrich bundle gives a lot of insights on the geometry of $X$.

Despite of all this work, the main existence problem remained open until the 2026 breakthrough paper \cite{a}, where several examples of surface pairs $(S,H)$ not carrying Ulrich bundles were produced. Anghel used the following obstruction to the existence of an Ulrich bundle on a surface pair $(S,H)$ (see \cite[Prop.~2.1]{a}, improving on a previous idea of Beauville \cite{b}):
$$H^2 < K_S^2-8\chi(\O_S).$$
Our starting point was to notice that the above obstruction has a natural generalization in any dimension (see Lemma \ref{nonext}):
$$(n+1)H^n < \big(K_X^2-2c_2(X)\big)H^{n-2}.$$
Using the above and some examples present in \cite{a}, we produce natural examples of many $n$-dimensional smooth pairs $(X,H)$ that do not carry any Ulrich bundle. Our main result is the following

\begin{bthm}
\label{main}
Let $(S_k,L_k)$ be the pair consisting of a smooth irreducible surface and a very ample line bundle as in \cite[Thm.~1.2]{a}. Let $n \ge 3$, let $Y$ be any smooth irreducible $(n-2)$-dimensional variety and let $M$ be any very ample line bundle on $Y$. Let 
$$X=S_k \times Y, \ \ H=L_k \boxtimes M$$
and let $X' \in |aH|, a \ge 1$ be smooth, $H'=H_{|X'}$. 
Then for $k \gg 0$ the pairs $(X, H), (X', H')$, and all their successive hyperplane sections until dimension $2$, do not carry an Ulrich vector bundle. Moreover, $X'$ is not isomorphic to a (non-trivial) product.
\end{bthm}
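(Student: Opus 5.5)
The plan is to deduce everything from the numerical obstruction of Lemma \ref{nonext}: a pair $(Z,A)$ of dimension $m\ge 2$ has no Ulrich bundle as soon as $(m+1)A^m<(K_Z^2-2c_2(Z))A^{m-2}$. The strategy is to show that the ratio $(K^2-2c_2)H^{m-2}/H^m$ becomes arbitrarily large for $k\gg 0$ on $X$, on $X'$, and on all iterated hyperplane sections. From \cite[Thm.~1.2]{a} I will extract the property that makes the surfaces $S_k$ work. Noether's formula $c_2(S)=12\chi(\O_S)-K_S^2$ gives $K_S^2-2c_2(S)=3(K_S^2-8\chi(\O_S))$. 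I expect (and will check in \cite{a}) that $\rho_k:=(K_{S_k}^2-8\chi(\O_{S_k}))/L_k^2\to\infty$ as $k\to\infty$, since this is how \cite{a} beats Anghel's bound by an arbitrary margin. This is the only input needed from the surfaces.

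\textbf{Step 1: the product $X$.} Write $d=M^{n-2}$ and $e=(K_Y^2-2c_2(Y))M^{n-4}$ (take $e=0$ if $n=3$). Using $K_X=p^*K_S+q^*K_Y$ and $c_2(X)=p^*c_2(S)+p^*c_1(S)q^*c_1(Y)+q^*c_2(Y)$, the mixed terms cancel: $2K_SK_Y-2c_1(S)c_1(Y)=0$. Expanding $H^{n-2}=\sum\binom{n-2}{j}L^jM^{n-2-j}$, only degree-compatible products survive, so
$$(K_X^2-2c_2(X))H^{n-2}=3(K_S^2-8\chi(\O_S))\,d+\tbinom{n-2}{2}L^2\,e ,\qquad H^n=\tbinom n2 L^2 d .$$
Dividing by $L^2d$, the obstruction becomes $3\rho_k+\binom{n-2}{2}e/d>(n+1)\binom n2$. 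The term $e/d$ depends only on $(Y,M)$, so this holds for $k\gg0$.

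\textbf{Step 2: divisors and hyperplane sections.} For smooth $X'\in|aH|$, adjunction gives $c_1(X')=(c_1(X)-aH)_{|X'}$ and $c_2(X')=(c_2(X)+aHK_X+a^2H^2)_{|X'}$, so $K_{X'}^2-2c_2(X')=(K_X^2-2c_2(X)-a^2H^2)_{|X'}$. Hence
$$(K_{X'}^2-2c_2(X'))H'^{\,n-3}=a(K_X^2-2c_2(X))H^{n-2}-a^3H^n,\qquad H'^{\,n-1}=aH^n .$$
The ratio therefore drops only by the constant $a^2$ and is still large for $k\gg0$. Iterating with $a=1$ for each successive hyperplane section down to dimension $2$ loses a bounded amount at each of finitely many steps. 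After fixing $a$, $n$ and $(Y,M)$, one choice of $k$ thus works for all these pairs.

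\textbf{Step 3: $X'$ is not a product. This is the step I expect to be the main obstacle.} My first attempt uses the Lefschetz hyperplane theorem together with Sommese-type results on ample divisors. Suppose $X'\cong A\times B$ nontrivially with $\dim X'=n-1\ge 2$. When $\dim X'\ge3$, Lefschetz identifies $\operatorname{Pic}$ and $H^2$ of $X'$ with those of $X$. The ample class $H'$ would then have to be compatible with two product structures, and Sommese's theorem that an ample divisor fibred over a positive-dimensional base forces $X$ to fibre compatibly should give a contradiction. Concretely, I would show that the projections of $A\times B$ extend to $X$, which is impossible because $X'$ meets every fibre of $X\to S_k$ and $X\to Y$ in a divisor.

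If that route stalls, I would fall back on a more numerical argument. The cohomology and Chern numbers of a product satisfy Künneth-type constraints, for instance on $\chi$ and on $K^2-2c_2$ computed as in Step 1. For $k\gg0$ these can be made to contradict the formulas of Step 2, since the invariants of $S_k$ grow while those of $Y$ stay fixed. The case $n=3$, where $X'$ is a surface and a product of curves, would be handled separately by the same numerical comparison.
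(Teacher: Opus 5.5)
Your Steps 1 and 2 are correct and coincide with the paper's argument. You use the same splitting of $(K_X^2-2c_2(X))H^{n-2}$, with the mixed terms cancelling, and the same reduction for $X'\in|aH|$ to $(n+a^2)H^n<(K_X^2-2c_2(X))H^{n-2}$. The input you expect from \cite{a} is available: $L_k^2=7k^9$ and $K_{S_k}^2-2c_2(S_k)=3(3k^2-11)k^9$, so $\rho_k=(3k^2-11)/7$. For hyperplane sections there is in fact no loss, since the ratio and the threshold both drop by $1$.

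The genuine gap is Step 3 for $n\ge 4$: neither route you sketch reaches a contradiction.
\begin{enumerate}
\item Sommese's extension theorem needs fibres of dimension $\ge 2$. It therefore says nothing about the projection $X'\to B$ when $A$ is a curve.
\item When both projections do extend, you get a morphism $X\to A\times B$ with one-dimensional fibres of which $X'$ is a section. That is not contradictory in itself: the diagonal of $\P^1\times\P^1$ is an ample section of a projection.
\item Your ``concrete'' contradiction is about the fibres of $X\to S_k$ and $X\to Y$. Those are not the maps the extension produces.
\item The numerical fallback uses invariants that do not detect products in dimension $\ge 3$. From Step 2 you only know the number $(K_{X'}^2-2c_2(X'))H'^{n-3}$. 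There is no K\"unneth constraint on it unless you know $A$, $B$ and how $H'$ sits on $A\times B$, and you control none of these as $k$ varies.
\end{enumerate}

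The missing idea is the top Newton class $p_{n-1}=(n-1)!\,\mathrm{ch}_{n-1}(T)$, the degree-$(n-1)$ analogue of $p_2=K^2-2c_2$. It is additive on exact sequences. For a nontrivial product of dimension $n-1$ one has $p_{n-1}(A\times B)=\pi_A^*p_{n-1}(A)+\pi_B^*p_{n-1}(B)=0$, because both factors have dimension $<n-1$. When $n\ge 4$ the same vanishing holds for $X=S_k\times Y$. The normal sequence $0\to T_{X'}\to {T_X}_{|X'}\to\O_{X'}(aH)\to 0$ then gives $0=p_{n-1}(X')=-(aH_{|X'})^{n-1}$, whose degree is $-a^nH^n\neq 0$. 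This is Lemma~\ref{nopro} in the paper, and it needs no condition on $k$.

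For $n=3$ one has $p_2(X)=\pi_1^*p_2(S_k)\neq 0$, so a numerical argument is genuinely needed. There your fallback works once spelled out. A product of curves has $K^2-2c_2=0$, whereas Step 2 gives $K_{X'}^2-2c_2(X')=3a\deg(M)\big(\sigma(S_k)-a^2L_k^2\big)>0$ for $k\gg 0$. This already follows from the inequality $3H'^2<K_{X'}^2-2c_2(X')$ you proved. The paper instead turns the product assumption into $7a^2=3k^2-11$ and rules this out modulo $7$ for every $k$.
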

Note that the surface sections of $X$ and $X'$ gives new examples of surface pairs, that is, different from the ones in \cite{a}, not carrying Ulrich bundles.

We observe that an explicit bound for $k$, depending on $(n, Y, M)$ (and $a$ in the second case), can be easily deduced from the proof of the theorem.

Given the above results, perhaps at this point the main question that remains open is the existence of Ulrich bundles for sufficiently ample polarizations. The latter is known for surfaces by \cite{cohu}.

\section{A non-product criterion}

We give a simple result that shows that a hypersurface section of a product is not a product.

\begin{lemma}
\label{nopro}
Let $X_1, X_2$ be two smooth irreducible varieties of dimension at least $2$ and let $H$ be a very ample line bundle on $X:=X_1 \times X_2$. Let $a \ge 1$ and let $D \in |aH|$ be smooth. Then $D$ is not isomorphic to a (nontrivial) product.
\end{lemma}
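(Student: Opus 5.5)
The plan is to compare the cohomology of $D$ with that of $X$ via the Lefschetz hyperplane theorem, and to show that a product decomposition of $D$ forces numerical relations on $X$ that the ampleness of $H$ rules out. Set $n_i=\dim X_i\ge 2$, so $\dim X=n_1+n_2\ge 4$ and $d:=\dim D\ge 3$. Since $D$ is an ample divisor, Lefschetz gives that $H^k(X,\mathbb Z)\to H^k(D,\mathbb Z)$ is an isomorphism for $k<d$ and injective for $k=d$. Moreover, the Grothendieck--Lefschetz theorem gives that restriction $\mathrm{Pic}(X)\to \mathrm{Pic}(D)$ is an isomorphism, because $d\ge 3$.

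Assume by contradiction that $D\cong Z_1\times Z_2$ with $d_i=\dim Z_i\ge 1$ and $d_1+d_2=d$. Let $A,B$ be the pullbacks to $D$ of very ample line bundles on $Z_1,Z_2$. Then:
\begin{itemize}
\item $A,B$ are nef;
\item $A^{d_1+1}\equiv 0$ and $B^{d_2+1}\equiv 0$;
\item $A^{d_1}B^{d_2}>0$;
\item $A+B$ is ample.
\end{itemize}
By Grothendieck--Lefschetz write $A=L_{|D}$ and $B=M_{|D}$ with $L,M\in\mathrm{Pic}(X)$. Intersecting on $D$ then translates into intersection numbers on $X$ of the form $L^iM^j\cdot aH$ with $i+j=d$. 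In particular
$$L^{d_1+1}\cdot N\cdot H=0\quad\text{for every monomial } N \text{ of degree } d_2-1 \text{ in } L,M,H,$$
and similarly with the roles of $L$ and $M$ exchanged.

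Next, decompose $L$ and $M$ using the structure of $X=X_1\times X_2$. Up to the Hom$(\mathrm{Alb},\mathrm{Pic}^0)$-type correspondence classes, which I expect to dispose of by working in $H^2$ and the Künneth decomposition $H^2(X)=H^2(X_1)\oplus H^1(X_1)\otimes H^1(X_2)\oplus H^2(X_2)$, we have $c_1(L)=\ell_1+\ell_2+\ell_{11}$, and similarly for $M$ and $H=h_1+h_2+h_{11}$. The aim is to show that the vanishing $A^{d_1+1}\equiv 0$ in $H^{2d_1+2}(D)$ lifts to $L^{d_1+1}=0$ in $H^{2d_1+2}(X)$. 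If $2d_1+2<d$ this follows directly from Lefschetz injectivity. Otherwise one uses that $c_1(L)^{d_1+1}\cdot[D]=0$ and hard Lefschetz for the ample class $H$ on $X$. Having $L^{d_1+1}=0=M^{d_2+1}$ on $X$ while $(L+M)$ restricts to an ample class and $L^{d_1}M^{d_2}H>0$, I will then compare with $\dim X=d+1$. Nilpotency orders of $L$ and $M$ add to at most $d_1+d_2+2=d+2$, while $L+M$ must have $(L+M)^{d}\cdot H>0$. The intended contradiction is the following: a class with $L^{d_1+1}=0$ on a product of two varieties of dimension $\ge 2$ must, via Künneth, essentially come from one factor. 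This forces $L$ and $M$ to be pulled back from $X_1$ and $X_2$ respectively, with $d_1\ge n_1$ or $d_2\ge n_2$ violated by $d_1+d_2=n_1+n_2-1$. Equivalently, one of the fibrations $D\to Z_i$ would extend to a fibration of $X$ whose fibers meet $D$ in positive-dimensional loci, contradicting ampleness of $D$.

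The main obstacle is the middle step: lifting the nilpotency $A^{d_1+1}\equiv 0$ from $D$ to $X$ when $2d_1+2\ge d$, where Lefschetz injectivity no longer applies. My first attempt would be to replace cohomology by the geometric fibration. The morphism $\varphi_A\colon D\to Z_1$ is given by sections of $L_{|D}$, and $H^0(X,L(-kD))\to H^0(D,L_{|D})$ considerations, via Kodaira vanishing, extend these sections to $X$. The aim would then be to show that $L$ is semiample on $X$ with Stein factorization of dimension $d_1$, contradicting that $D$ is ample and meets every fiber. If that fails, the fallback is the Künneth/Hodge-index bookkeeping above, treating the cases $d_1<n_1$ and $d_1\ge n_1$ separately.
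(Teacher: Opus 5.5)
Your proposal has genuine gaps. As written, each decisive step is either unproved or false.

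\textbf{Where it breaks.}
\begin{enumerate}
\item \emph{Lifting the nilpotency relation.} Hard Lefschetz for $H$ on $X$ (of dimension $d+1$) makes $\cup[D]\colon H^{k}(X)\to H^{k+2}(X)$ injective only for $k\le d$. With $k=2d_1+2$, this is exactly the range $2d_1+2\le d$ where the Lefschetz hyperplane theorem already works, so your ``otherwise'' branch adds nothing. Since $d_1+d_2=d$, this range contains at most one of the two relations $A^{d_1+1}\equiv0$, $B^{d_2+1}\equiv 0$. It contains neither when $|d_1-d_2|\le 1$, for instance in the basic case $n_1=n_2=2$, $d=3$. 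The Kodaira-vanishing fallback is not carried out either.
\item \emph{The K\"unneth claim is false.} A class with $L^{d_1+1}=0$ on $X_1\times X_2$ need not come from a factor: any morphism from $X$ to a lower-dimensional variety gives such classes. Concretely, take $B$ an abelian surface, $X=B\times B$, $\delta(x,y)=x-y$, $\theta$ ample on $B$, and $L=\delta^*\theta$. Then $L^3=0\neq L^2$, yet $c_1(L)$ has nonzero components in $H^2(X_1)$, in $H^2(X_2)$ and in $H^1(X_1)\otimes H^1(X_2)$.
\item \emph{No contradiction at the end.} Even granting $L=\pi_1^*\ell_1$ and $M=\pi_2^*m_2$, you only get $d_1\le n_1$ and $d_2\le n_2$. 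Together with $d_1+d_2=n_1+n_2-1$, these are satisfied by $(d_1,d_2)=(n_1,n_2-1)$ or $(n_1-1,n_2)$. Nor does an ample divisor meeting the fibres of a fibration in positive-dimensional loci contradict ampleness.
\end{enumerate}

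\textbf{The missing idea.} You need a characteristic-class invariant that vanishes for dimension reasons on \emph{both} products, with $n=\dim X=d+1$. The paper uses the Newton class $p_{n-1}=(n-1)!\,ch_{n-1}$ of the tangent bundle, which is additive. On a product whose factors all have dimension $<n-1$, it is a sum of pullbacks of degree-$2(n-1)$ classes on the factors, hence zero. This applies to $X$, using $n_i\ge 2$ to get $n_i\le n-2$, and also to $D\cong Y_1\times Y_2$. Additivity on
\[
0\to T_D\to {T_X}_{|D}\to \O_D(aH)\to 0
\]
then forces $c_1(\O_D(aH))^{n-1}=0$, whose degree is $a^nH^n>0$. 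No Picard-group, Lefschetz or fibration analysis is needed.
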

\begin{proof}
Consider, for a given smooth variety $V$, the $k$-th Newton class
$$p_k(V):=p_k(T_V)=k!ch_k(T_V).$$
For $i \in \{1,2\}$, let $n_i = \dim X_i \ge 2, n=n_1+n_2, n_i < n-1$ and let $\pi_i : X=X_1 \times X_2 \to X_i$ be the projections. Then $T_X=\pi_1^*T_{X_1} \oplus \pi_2^*T_{X_2}$, hence
\begin{equation}
\label{p1}
p_{n-1}(X)=\pi_1^*p_{n-1}(X_1)+\pi_2^*p_{n-1}(X_2)=0.
\end{equation}
Suppose that
$$D \cong Y_1 \times Y_2$$
for two smooth projective positive dimensional varieties $Y_i, i \in \{1,2\}$. Let $p_i : D \cong Y_1 \times Y_2 \to Y_i$ be the projections, so that $T_D \cong p_1^*T_{Y_1} \oplus p_2^*T_{Y_2}$. Since $\dim D=n-1=\dim Y_1+\dim Y_2$, we have that $\dim Y_i < n-1$, hence
\begin{equation}
\label{p2}
p_{n-1}(D)=p_1^*p_{n-1}(Y_1)+p_2^*p_{n-1}(Y_2)=0.
\end{equation}
On the other hand, the exact sequence
$$0 \to T_D \to  {T_X}_{|D} \to \O_D(aH) \to 0$$
and the additivity of Newton classes give, using \eqref{p1} and \eqref{p2}, the contradiction
$$0=p_{n-1}(D)=p_{n-1}(X)_{|D}-p_{n-1}(\O_D(aH))=a^nH^n.$$
\end{proof}

\section{An $n$-dimensional obstruction to the existence of an Ulrich bundle}

The following is a simple generalization of Anghel's \cite[Prop.~2.1]{a} (and Beauville's \cite{b}) obstruction.
\begin{lemma}
\label{nonext}
Let $X$ be a smooth irreducible variety of dimension $n \ge 2$, let $H$ be a very ample line bundle on $X$.
If
\begin{equation}
\label{dis}
(n+1)H^n < \big(K_X^2-2c_2(X)\big)H^{n-2}
\end{equation}
then there is no Ulrich bundle for $(X, H)$ and for all successive smooth hyperplane sections $(X_i,H_{|X_i}), 2 \le i \le n-1$, where $X_i = X \cap H_1 \cap \ldots \cap H_{n-i}$ with $H_j \in |H|$.
\end{lemma}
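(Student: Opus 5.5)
The plan is to reduce everything to the surface case, where Anghel's obstruction \cite[Prop.~2.1]{a} applies. Two facts drive the reduction. First, the restriction of an Ulrich bundle to a smooth hyperplane section is again Ulrich. Second, the quantity $(K^2-2c_2)H^{\dim-2}$ is invariant under passing to hyperplane sections. Granting both, suppose some $(X_i,H_{|X_i})$ with $2\le i\le n$ (where $X_n=X$) carries an Ulrich bundle $\E$. Restricting $\E$ successively to $X_{i-1},\dots,X_2$ produces an Ulrich bundle on the smooth surface $S:=X_2$. It therefore suffices to show that \eqref{dis} implies $H_{|S}^2<K_S^2-8\chi(\O_S)$, since this contradicts \cite[Prop.~2.1]{a}.

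\textbf{Restriction of Ulrich bundles.} This is the standard fact from \cite{es}. For $Y\in|H|$ smooth, consider
$$0\to\E(-(p+1)H)\to\E(-pH)\to\E_{|Y}(-pH)\to 0.$$
For $1\le p\le n-1$, both $p$ and $p+1$ lie in $[1,n]$, so the two left terms have vanishing cohomology. Hence $H^j(\E_{|Y}(-pH))=0$ for all $j$, and $\E_{|Y}$ is Ulrich on $(Y,H_{|Y})$.

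\textbf{Invariance of the numerical quantity.} Let $Y\in|H|$ be smooth on an $m$-dimensional $Z$ with $m\ge 3$. Adjunction gives $K_Y=(K_Z+H)_{|Y}$. The sequence $0\to T_Y\to T_{Z|Y}\to\O_Y(H)\to0$ gives $c_1(Y)=(c_1(Z)-H)_{|Y}$ and $c_2(Y)=(c_2(Z)-c_1(Y)H)_{|Y}=(c_2(Z)-c_1(Z)H+H^2)_{|Y}$. Therefore
$$K_Y^2-2c_2(Y)=\big(K_Z^2+2K_ZH+H^2-2c_2(Z)-2K_ZH-2H^2\big)_{|Y}=\big(K_Z^2-2c_2(Z)-H^2\big)_{|Y}.$$
Intersecting with $H^{m-3}$ yields
$$(K_Y^2-2c_2(Y))H^{m-3}=(K_Z^2-2c_2(Z))H^{m-2}-H^m.$$
On the other hand, $(\dim+1)H^{\dim}$ drops from $(m+1)H^m$ to $mH^m$. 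So the inequality $(m+1)H^m<(K_Z^2-2c_2(Z))H^{m-2}$ for $Z$ is equivalent to the corresponding inequality $mH^{m-1}_{|Y}<(K_Y^2-2c_2(Y))H^{m-3}$ for $Y$. Applying this inductively shows that \eqref{dis} holds for every $X_i$, and in particular for $S$ it reads $3H^2<K_S^2-2c_2(S)$.

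\textbf{Conversion to Anghel's form.} Noether's formula gives $12\chi(\O_S)=K_S^2+c_2(S)$, hence
$$K_S^2-8\chi(\O_S)=\tfrac13\big(K_S^2-2c_2(S)\big).$$
Thus $3H^2<K_S^2-2c_2(S)$ is exactly $H^2<K_S^2-8\chi(\O_S)$, and \cite[Prop.~2.1]{a} rules out an Ulrich bundle on $S$, which gives the required contradiction. The only step needing real care is the Chern class bookkeeping in the invariance computation, in particular the sign conventions relating $K$ and $c_1$. The rest is formal.
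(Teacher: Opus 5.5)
Your proof is correct, but it runs in the opposite direction from the paper's.

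The paper proves the obstruction directly on the $n$-dimensional $X$, re-running Anghel's argument in dimension $n$. It sets $\alpha=c_1(\E)-\frac r2\big(K_X+(n+1)H\big)$ and uses $\alpha H^{n-1}=0$ together with Hodge index on a surface section to get $\alpha^2H^{n-2}\le 0$. It then plugs the Riemann--Roch expression for $c_2(\E)H^{n-2}$ into the identity $\Delta(\E)H^{n-2}=\alpha^2H^{n-2}+\frac{r^2}{12}\big(2c_2(X)-K_X^2+(n+1)H^2\big)H^{n-2}$. It concludes from $\mu_H$-semistability of Ulrich bundles and Bogomolov's inequality. The invariance computation, which is identical to yours, is used there only to apply this $n$-dimensional argument verbatim to each $X_i$.

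You instead go down to a surface. You restrict the Ulrich bundle to a smooth surface section, carry the inequality along by the invariance computation, and recognize via Noether's formula that $3H^2<K_S^2-2c_2(S)$ is literally Anghel's $H^2<K_S^2-8\chi(\O_S)$. You should say explicitly that the remaining hyperplanes from $X_i$ down to $X_2$ are chosen general, hence smooth by Bertini. Also, the range $1\le p\le n-1$ in your restriction step should use the current dimension at each stage. Both points are cosmetic.

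What your route buys: it is shorter and purely formal once Anghel's surface result is granted. It needs neither the higher-dimensional Riemann--Roch formula for $c_2(\E)$ nor semistability and Bogomolov on $X$ itself. It also makes transparent that the $n$-dimensional criterion carries no more information than Anghel's criterion on a general surface section.

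What the paper's route buys: it is self-contained, since it re-proves rather than cites the surface case. It also produces the explicit discriminant identity on $X$, which shows directly how the obstruction arises in dimension $n$.
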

\begin{proof}
Assume that there exists a rank $r$ Ulrich bundle $\E$ for $(X, H)$. Set
$$\alpha = c_1(\E)-\frac{r}{2}\big(K_X+(n+1)H\big).$$
As is well known (see for example \cite[Lemma 3.1(iii)]{lr}), we have that $\alpha H^{n-1}=0$. Let $S$ be a smooth surface section of $X \subset \P H^0(H)$. Then $\alpha_{|S}H_{|S}=\alpha H^{n-1}=0$ and therefore $\alpha_{|S}^2 \le 0$ by the Hodge index theorem. Therefore
\begin{equation}
\label{1}
\alpha^2 H^{n-2} \le 0.
\end{equation}
By \cite[Lemma 3.1(viii)]{lr} we have that 
$$c_2(\E) H^{n-2}=\frac{1}{2}\big(c_1(\E)^2-c_1(\E) K_X\big)H^{n-2}+\frac{r}{12}\big(K_X^2+c_2(X)-\frac{3n^2+5n+2}{2}H^2\big)H^{n-2}$$
and therefore an easy calculation gives that
$$\Delta(\E)=\big(2rc_2(\E)-(r-1)c_1(\E)^2\big)H^{n-2}=\alpha^2 H^{n-2}+\frac{r^2}{12}\big(2c_2(X)-K_X^2+(n+1)H^2\big)H^{n-2}.$$
Since $\E$ is $\mu_H$-semistable by \cite[Thm.~2.9(a)]{ch}, it follows by Bogomolov's inequality that $
\Delta(\E) \ge 0$. Using \eqref{1} we therefore get that
$$\big(2c_2(X)-K_X^2+(n+1)H^2\big)H^{n-2} \ge 0$$
that is
$$(n+1)H^n \ge (K_X^2-2c_2(X))H^{n-2}$$
a contradiction. This proves that if \eqref{dis} holds, then then there is no Ulrich bundle for $(X, H)$. 

To prove that the same property holds on $(X_i,H_{|X_i})$, it is enough to show that \eqref{dis} is equivalent to the corresponding one on a smooth hyperplane section of $X$. Set $X'=X_{n-1}$, so that $X' \in |H|$, and $H'=H_{|X'}$. We have
\begin{equation}
\label{111}
(H')^{n-1}=H^n
\end{equation}
and
\begin{equation}
\label{121}
K_{X'}^2(H')^{n-3}=(K_X+H)^2H^{n-2}=K_X^2H^{n-2}+2K_XH^{n-1}+H^n.
\end{equation}
From the exact sequence
$$0 \to T_{X'} \to {T_X}_{|X'} \to \O_{X'}(H') \to 0$$
we obtain $c_2(X')=\big(c_2(X)+K_XH+H^2\big)_{|X'}$, and consequently,
\begin{equation}
\label{131}
c_2(X')(H')^{n-3}=c_2(X)H^{n-2}+K_XH^{n-1}+H^n.
\end{equation}
Now, using \eqref{111}, \eqref{121} and \eqref{131}, we see that \eqref{dis} for $(X', H')$, that is
$$n(H')^{n-1}<(K_{X'}^2-2c_2(X'))(H')^{n-3}$$
is equivalent to
$$nH^n<K_X^2H^{n-2}-2c_2(X)H^{n-2}-H^n$$
hence to
$$(n+1)H^n<\big(K_X^2-2c_2(X)\big)H^{n-2}$$
that is to \eqref{dis} for $(X,H)$.
\end{proof}

\section{Proof of the main theorem}

\renewcommand{\proofname}{Proof of Theorem \ref{main}}
\begin{proof}
We set $S=S_k$ and $L=L_k$ and let $p: X=S \times Y \to S$ and $q: X=S \times Y \to Y$ be the two projections. 

We compute the quantities in \eqref{dis}. 

We have
$$H^n=(p^*L+q^*M)^n=\sum\limits_{j=0}^n \binom{n}{j}p^*L^jq^*M^{n-j}$$
and therefore
\begin{equation}
\label{h}
H^n=\binom{n}{2}L^2 M^{n-2}.
\end{equation}
Similarly, if $n \ge 4$ we get
\begin{equation}
\label{hn-2}
\begin{aligned}
H^{n-2} & = \big(p^*L+q^*M\big)^{n-2}=\sum\limits_{j=0}^{n-2} \binom{n-2}{j}p^*L^jq^*M^{n-2-j} \\ & =q^*M^{n-2}+(n-2)p^*Lq^*M^{n-3}+\binom{n-2}{2}p^*L^2q^*M^{n-4}.
\end{aligned}
\end{equation}
Now, $K_X=p^*K_S+q^*K_Y$. If $n \ge 4$, using \eqref{hn-2}, we get
\begin{equation}
\label{k4}
\begin{aligned}
K_X^2H^{n-2} & = \big(p^*K_S^2+q^*K_Y^2+2p^*K_Sq^*K_Y\big)\big(q^*M^{n-2}+(n-2)p^*Lq^*M^{n-3}+\binom{n-2}{2}p^*L^2q^*M^{n-4}\big) \\
& = K_S^2M^{n-2}+\binom{n-2}{2}L^2(K_Y^2M^{n-4})+2(n-2)(LK_S)(K_YM^{n-3}).
\end{aligned}
\end{equation}
If $n=3$, the same calculation shows that
\begin{equation}
\label{k3}
K_X^2H=K_S^2\deg(M)+2(LK_S)\deg(K_Y).
\end{equation}
Next, $T_X \cong p^*T_S \oplus q^*T_Y$ and therefore
$$c_2(X)=p^*c_2(S)+p^*K_Sq^*K_Y+q^*c_2(Y).$$
Using again \eqref{hn-2} when $n \ge 4$, we find 
\begin{equation}
\label{c4}
\begin{aligned}
c_2(X)H^{n-2} & = \big(p^*c_2(S)+p^*K_Sq^*K_Y+q^*c_2(Y)\big)\big(q^*M^{n-2}+(n-2)p^*Lq^*M^{n-3}+\binom{n-2}{2}p^*L^2q^*M^{n-4}\big) \\
& = c_2(S)M^{n-2}+(n-2)(LK_S)(K_YM^{n-3})+\binom{n-2}{2}L^2(c_2(Y)M^{n-4}).
\end{aligned}
\end{equation}
When $n=3$ we get
\begin{equation}
\label{c3}
\begin{aligned}
c_2(X)H & = \big(p^*c_2(S)+p^*K_Sq^*K_Y\big)\big(p^*L+q^*M\big) \\
& = c_2(S) \deg(M)+(LK_S) \deg(K_Y).
\end{aligned}
\end{equation}

We now consider \eqref{dis}. 

First, assume that $n \ge 4$. Using \eqref{h}, \eqref{k4} and \eqref{c4}, we see that \eqref{dis} is holds if and only if
$$\begin{aligned} 
(n+1)\binom{n}{2}L^2 M^{n-2} & < K_S^2M^{n-2}+\binom{n-2}{2}L^2(K_Y^2M^{n-4})+2(n-2)(LK_S)(K_YM^{n-3}) \\
& -2c_2(S)M^{n-2}-2(n-2)(LK_S)(K_YM^{n-3})-2\binom{n-2}{2}L^2(c_2(Y)M^{n-4})
\end{aligned}$$ 
if and only if
\begin{equation}
\label{f1}
\big((n+1)\binom{n}{2}M^{n-2}-\binom{n-2}{2}K_Y^2M^{n-4}+2\binom{n-2}{2}c_2(Y)M^{n-4}\big)L^2 < M^{n-2}\big(K_S^2-2c_2(S)\big).
\end{equation}
Set
$$f(n,Y,M)=(n+1)\binom{n}{2}M^{n-2}-\binom{n-2}{2}K_Y^2M^{n-4}+2\binom{n-2}{2}c_2(Y)M^{n-4}$$
so that \eqref{f1} becomes
\begin{equation}
\label{f2}
f(n,Y,M)L^2 < M^{n-2}\big(K_S^2-2c_2(S)\big).
\end{equation}
By \cite[Thm.~1.2]{a} we have that $L^2=7k^9$ and $K_S^2-2c_2(S)=3\sigma(S)=3(3k^2-11)k^9>0$.
Therefore, if $f(n,Y,M) \le 0$, then certainly \eqref{f2} holds. On the other hand, if $f(n,Y,M)>0$,  \eqref{f2} is equivalent to
$$\frac{L^2}{\sigma(S)}=\frac{7}{3k^2-11} <\frac{3M^{n-2}}{f(n,Y,M)}$$
that certainly holds for $k \gg 0$.

Finally, assume that $n=3$. Using \eqref{h}, \eqref{k3} and \eqref{c3}, we see that \eqref{dis} is holds if and only if
$$\begin{aligned} 
12L^2 \deg(M) & < K_S^2\deg(M)+2(LK_S)\deg(K_Y) \\
& -2c_2(S)\deg(M)-2(LK_S)\deg(K_Y)
\end{aligned}$$ 
if and only if
$$L^2 < \frac{K_S^2-2c_2(S)}{12}=\frac{\sigma(S)}{4}$$
that is
\begin{equation}
\label{f23}
\frac{L^2}{\sigma(S)}=\frac{7}{3k^2-11} <\frac{1}{4}
\end{equation}
that certainly holds for $k \ge 4$.

Therefore, in all cases, \eqref{dis} is satisfied and we conclude by Lemma \ref{nonext} that $(X, H)$ and all its successive hyperplane sections until dimension $2$, do not carry an Ulrich vector bundle.

As for $(X', H')$, the plan is to check again \eqref{dis}.

We have
\begin{equation}
\label{11}
(H')^{n-1}=aH^n
\end{equation}
and
\begin{equation}
\label{12}
K_{X'}^2(H')^{n-3}=aK_X^2H^{n-2}+2a^2K_XH^{n-1}+a^3H^n.
\end{equation}
From the exact sequence
$$0 \to T_{X'} \to {T_X}_{|X'} \to \O_{X'}(aH') \to 0$$
we obtain $c_2(X')=\big(c_2(X)+aK_XH+a^2H^2\big)_{|X'}$, and consequently,
\begin{equation}
\label{13}
c_2(X')(H')^{n-3}=ac_2(X)H^{n-2}+a^2K_XH^{n-1}+a^3H^n.
\end{equation}
Now, using \eqref{11}, \eqref{12} and \eqref{13}, we see that \eqref{dis} for $(X', H')$, that is
$$n(H')^{n-1}<(K_{X'}^2-2c_2(X'))(H')^{n-3}$$
is equivalent to
$$naH^n<aK_X^2H^{n-2}-2ac_2(X)H^{n-2}-a^3H^n$$
hence to
\begin{equation}
\label{14}
(n+a^2)H^n<\big(K_X^2-2c_2(X)\big)H^{n-2}.
\end{equation}
The same calculations as the case $(X,H)$ show that \eqref{14} is the same, for $n \ge 4$, as \eqref{f2} with the new function
$$f(n,a,Y,M)=(n+a^2)\binom{n}{2}M^{n-2}-\binom{n-2}{2}K_Y^2M^{n-4}+2\binom{n-2}{2}c_2(Y)M^{n-4}.$$
That is, \eqref{14} is equivalent to
$$f(n,a,Y,M)L^2 < M^{n-2}\big(K_S^2-2c_2(S)\big)$$
and the latter is certainly satisfied for $k \gg 0$ as we showed for \eqref{f2}. Similarly, when $n=3$, the same calculations as the case $(X,H)$ show that \eqref{14} is similar to \eqref{f23}, namely is
$$\frac{L^2}{\sigma(S)}=\frac{7}{3k^2-11} <\frac{1}{3+a^2}$$
that is certainly satisfied for $k \gg 0$. We therefore we conclude by again by Lemma \ref{nonext}.

It remains to show that $X'$ is not a product. The latter follows from Lemma \ref{nopro} when $n \ge 4$. If $n=3$, assume that $X' \cong C_1 \times C_2$, for two smooth curves $C_i, i \in \{1,2\}$. Let $p_i : D=Y_1 \times Y_2 \to Y_i$ be the projections, so that $T_D \cong p_1^*T_{Y_1} \oplus p_2^*T_{Y_2}$. As in the proof of Lemma \ref{nopro}, we have that $p_2(D)=p_2(Y)=0$, while $p_2(X)=\pi_1^*p_2(S)$. Therefore, the normal bundle sequence of $X'$ gives, using \eqref{h},
$$\begin{aligned}
3a^3L^2\deg(M) & =a^3H^3=p_2(\O_{X'}(aH))=p_2(X)_{|X'}=a\pi_1^*p_2(S)H= a\big(\pi_1^*c_1(S)^2-2\pi_1^*c_2(S)\big)(\pi_1^*L+\pi_2^*M\big) \\
& = a\big(c_1(S)^2-2c_2(S)\big)\deg(M)=3a\sigma(S)\deg(M)
\end{aligned}$$
and dividing by $3a\deg(M)$, the above becomes
$$a^2L^2=\sigma(S)$$
that is 
$$7a^2=3k^2-11$$
hence $3k^2 \equiv 4$ (mod $7$), a contradiction. Therefore $X'$ is not a product. 
\end{proof}
\renewcommand{\proofname}{Proof}

{\bf Acknowledgement}

We thank R. Vacca for a couple of useful remarks on a previous version.

\end{document}